\newtheorem{theorem}{theorem}[section]
\newtheorem{conj}[theorem]{Conjecture}
\newtheorem{exmp}[theorem]{Example}
\newtheorem{lem}[theorem]{Lemma}
\newtheorem{nota}[theorem]{Notation}
\newtheorem{rmk}[theorem]{Remark}
\begin{document}

\title{\textbf{Computing twisted Alexander polynomials for Montesinos links}}
\author{\Large Haimiao Chen\footnote{\em{Email: chenhm@math.pku.edu.cn}} \\
Beijing Technology and Business University, Beijing, China \\
}
\date{}
\maketitle

\begin{abstract}
  In recent years, twisted Alexander polynomial has been playing an important role in low-dimensional topology.
  For Montesinos links, we develop an efficient method to compute the twisted Alexander polynomial associated to any linear representation.
  In particular, formulas for multi-variable Alexander polynomials of these links can be easily derived.

  \medskip
  \noindent {\bf Keywords:}  linear representation, twisted Alexander polynomial,  multi-variable Alexander polynomial,
  Montesinos link  \\
  {\bf MSC2020:} 57K14
\end{abstract}

\section{Introduction}

Twisted Alexander polynomial (TAP for short) was first proposed by Lin \cite{Lin01} and Wada \cite{Wa94} independently.
Given a link $L\subset S^3$. To each linear representation $\rho$ of $\pi(L):=\pi_1(S^3-L)$ is associated the TAP $\Delta_{L,\rho}$.
People have found TAP to be extremely useful in low-dimensional topology.

Friedl and Vidussi \cite{FV07} showed that TAP can detect the unknot and the Hopf link. They also constructed a pair
of knots with the same HOMFLY polynomial, Khovanov and knot Floer homology, but distinguished by TAP.

As a classical result of Fox and Milnor \cite{FM66}, if $K$ is slice, then the ordinary Alexander polynomial can be written as
$f(t)f(t^{-1})$ for some $f$. Generalized to TAP, more slice obstructions of such kind were obtained. Kirk and Livingston \cite{KL99}
showed that some knots (e.g. $8_{17}$) are not concordant to their inverses. Herald, Kirk and Livingston \cite{HKL10} showed that 16
of the knots with no more than 12 crossings are not slice; this could not be done before.

Fibredness can also be detected via TAP. Kitano and Morifuji \cite{KM05} proved that if $K$ is fibered, then $\Delta_{K,\rho}$
is monic of degree $4g(K)-2$ (where $g(K)$ is the genus of $K$) for any nonabelian $\rho:\pi(K)\to{\rm SL}(2,\mathbb{F})$ for any field $\mathbb{F}$.
Closely related is the following
\begin{conj} [Dunfield, Friedl and Jackson, 2012] \label{conj}
If $K$ is a hyperbolic knot and $\rho_{\rm holo}:\pi(K)\to{\rm SL}(2,\mathbb{C})$ is a lift of the discrete and faithful representation defining the hyperbolic metric, then $\deg\Delta_{K,\rho_{\rm holo}}=4g(K)-2$. Furthermore, $K$ is fibered if and only if $\Delta_{K,\rho_{\rm holo}}$ is monic.
\end{conj}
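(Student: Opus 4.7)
The plan is to split the statement into two parts: (i) the degree identity $\deg\Delta_{K,\rho_{\mathrm{holo}}}=4g(K)-2$ (reading the typo $\Delta_{K,\rho_{\mathrm{holo}}}=4g(K)-2$ as a degree equation), and (ii) the characterization of fibredness via monicness. The first is an upper-bound-plus-sharpness statement; the second is a biconditional with one half already in the literature.

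For (i), I would start from the general Seifert-surface genus bound $\deg\Delta_{K,\rho}\le\dim(\rho)(2g(K)-1)$, available for any linear $\rho$, which for $\dim(\rho)=2$ yields the desired upper bound $4g(K)-2$. To show that the bound is attained by $\rho_{\mathrm{holo}}$, I would compute the twisted homology on the infinite cyclic cover using a minimal-genus Seifert surface $\Sigma$, and argue that the holonomy representation is sufficiently generic --- specifically, irreducible and non-parabolic on $\pi_1(\Sigma)$ regarded as a subgroup of $\pi(K)$ --- so that the Alexander matrix over $\mathbb{C}[t^{\pm1}]$ attains full generic rank, preventing degree drop. The key geometric input is that $\rho_{\mathrm{holo}}$ sees the entire hyperbolic structure and so cannot factor through a proper subgroup.

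For (ii), the implication ``fibered implies monic'' is precisely the theorem of Kitano--Morifuji already quoted in the excerpt. For the converse, my plan would be to invoke the Friedl--Vidussi fibering criterion, which characterizes fibredness of $K$ by the monicness of twisted Alexander polynomials over a suitable cofinal tower of finite covers. The remaining problem is to descend from this family-level criterion to the single representation $\rho_{\mathrm{holo}}$; Mostow rigidity makes this plausible for hyperbolic knots, since the holonomy is unique up to conjugation and complex conjugation, so any failure of fibredness should already be visible in its Reidemeister torsion.

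The hard step --- and the reason the statement remains a conjecture rather than a theorem --- is exactly this last descent. The Friedl--Vidussi input is a statement about a whole sequence of representations factoring through finite quotients, whereas the conjecture concerns one distinguished representation. Closing the gap seems to require either (a) identifying $\Delta_{K,\rho_{\mathrm{holo}}}$ with an analytic torsion of the hyperbolic structure and invoking $L^2$-methods to relate its leading coefficient to the Thurston norm, or (b) a combinatorial rigidity statement showing that any non-fibered hyperbolic knot forces a lower-order term in $\Delta_{K,\rho_{\mathrm{holo}}}$. I would expect most of the proof effort to be concentrated in controlling this leading coefficient for non-fibered examples; the degree formula (i), by contrast, looks approachable with the Seifert-surface methods already used in the ordinary Alexander case.
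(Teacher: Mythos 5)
This statement is labeled a \emph{conjecture} in the paper (attributed to Dunfield, Friedl and Jackson, 2012), and the paper offers no proof of it; it is quoted only as motivation for the fibredness applications of twisted Alexander polynomials. So there is no proof in the paper to compare yours against, and --- as you yourself concede in your final paragraph --- your proposal is not a proof either: the ``descent'' from the Friedl--Vidussi criterion to the single representation $\rho_{\rm holo}$ is exactly the open problem, and neither of the two routes you sketch for closing it, (a) or (b), is carried out. A plan that ends with ``the hard step remains'' cannot be accepted as a proof of the statement.

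Beyond that global issue, two of your intermediate steps are weaker than you present them. For part (i), the inequality $\deg\Delta_{K,\rho}\le 4g(K)-2$ requires knowing that $\Delta_{K,\rho_{\rm holo}}\neq 0$ in the first place, and sharpness does not follow from $\rho_{\rm holo}$ being ``irreducible and non-parabolic on $\pi_1(\Sigma)$'': irreducibility of the restriction does not force the twisted Alexander matrix over $\mathbb{C}[t^{\pm1}]$ to have full rank, and no mechanism is given that actually prevents degree drop. For part (ii), the Friedl--Vidussi theorem concerns representations factoring through \emph{finite} groups, whereas $\rho_{\rm holo}$ is faithful with infinite image, so it never factors through a finite quotient; Mostow rigidity says the holonomy is essentially unique, but gives no bridge from the finite-quotient family to this one representation. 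The honest conclusion is that the statement is still a conjecture, and your write-up should say so rather than be framed as a proof attempt.
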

The following result of Friedl and Vidussi \cite{FV11} is remarkable: a knot $K$ is fibered if and only if there exist integers $a,b$ (which can be
expressed in terms of invariants of $K$) such that $\deg\Delta_{K,\rho}=a|G|+b$ for any representation $\rho$ factoring through a finite group $G$.

Another topic attracting much attention is the relation $K\ge K'$ defined by the existence of epimorphisms $\pi(K)\twoheadrightarrow\pi(K')$, which was known to be a partial order among prime knots.
As shown in \cite{KSW05}, if $\varphi:\pi(K)\twoheadrightarrow\pi(K')$ is an epimorphism, then $\Delta_{K,\rho\circ\varphi}/\Delta_{K',\rho}\in R[t^{\pm 1}]$ for any $\rho:\pi(K')\to{\rm GL}(d,R)$.

For more applications of TAP, see \cite{DFL14,FV08,Go16,HLN06,TY17} and the references therein.

TAP can be thought of as a non-abelian, yet manageable invariant.
TAPs of twist knots for nonabelian ${\rm SL}(2,\mathbb{C})$-representations were computed in \cite{Mo08} and interesting applications were found.
Hoste and Shanahan \cite{HS13} computed $\Delta_{K,\rho}$ for a class of 2-bridge knots $K$ and for $\rho$ factorizing through a dihedral
group. Morifuji and Tran \cite{MT14} computed TAPs associated to parabolic ${\rm SL}(2,\mathbb{C})$-representations of double twist knots
and verified Conjecture \ref{conj}. Tran \cite{Tr14} derived formulas for $\Delta_{K,\rho}$, when $K$ is a torus knot or a twist knot,
and $\rho$ is the adjoint representation associated to an ${\rm SL}(2,\mathbb{C})$-representation.
Besides these works, in the literature there are few computations for families of links or families of representations.
Indeed, unlike the ordinary Alexander polynomial, in general there is no ``skein relation" to use. For a link with $m$ crossings and a $d$-dimensional linear representation, to obtain TAP by definition, one needs to compute the determinant of a matrix of size $(m-1)d\times(m-1)d$, which is a tremendous job.

However, when restricted to Montesinos links $L$, the situation is different. We shall develop an efficient method for deriving a ``universal" formula for the TAP associated to any linear representation.
For a $d$-dimensional linear representation $\rho$, we may reduce the computation of $\Delta_{L,\rho}$ to those among $d\times d$-matrices.
This method will be useful for knot theorists to do computations. After all, Montesinos links, especially double twist links
and pretzel links, are usually taken as examples to support general results, sometimes a new conjecture, as done in \cite{As18,MT14,MT17}.
Specialized to the trivial representation, multi-variable Alexander polynomials are quickly computed, which will be beneficial to the realization problem of Alexander polynomials (see \cite{St10} Section 9).
Furthermore, till now the relation $\ge$ between Montesinos knots is not well understood, neither is ``1-domination" which is an enhance of $\ge$ (see \cite{BBRW15}); we expect TAP to promote studying these relations.

The content is organized as follows. Section 2 is a preliminary, introducing some basic notions and recalling TAP. In Section 3 we present the method, and in Section 4 we give several practical examples for illustration.

\medskip

{\bf Acknowledgement}  \\
This work is supported by NSFC-11401014, NSFC-11501014, NSFC-11771042.

\section{Preliminary}

\begin{nota}
\rm
For a (not necessarily commutative) ring $R$, let $R^\times$ denote the set of invertible elements.
For positive integers $d,d'$, let $\mathcal{M}^{d'}_{d}(R)$ denote the set of
$d\times d'$ matrices with entries in $R$. Let $\mathcal{M}_d(R)=\mathcal{M}^d_{d}(R)$.

Given $A\in\mathcal{M}^{d'}_{d}(R)$. Let $A_{\neg i}\in\mathcal{M}^{d'}_{d-1}(R)$ (resp. $A^{\neg j}\in\mathcal{M}^{d'-1}_{d}(R)$)
be the matrix obtained by deleting the $i$-th row (resp. $j$-th column) of $A$.
For a ring homomorphism $\psi:R\to R'$ and $A\in\mathcal{M}^{d'}_{d}(R)$, let $\psi(A)\in\mathcal{M}^{d'}_{d}(R')$ denote the matrix
obtained from $A$ by replacing each entry with its image under $\psi$.

For $k\in\mathbb{Z}$ and $a\in R^\times$, put
$$[k]_a=\begin{cases} 1+a+\cdots +a^{k-1}, & k>0, \\ 0, &k=0, \\ -a^{k}(1+a+\cdots+a^{|k|-1}), &k<0.\end{cases}$$

Given a group homomorphism $\sigma:G\to H$, let $\check{\sigma}:\mathbb{Z}G\to\mathbb{Z}H$ denote the canonical linear extension,
which is a ring homomorphism.

For two elements $g,h$ of a group, let $g.h=ghg^{-1}$.
\end{nota}

\subsection{Twisted Alexander polynomial}

Suppose $L$ is an oriented link with $n$ components.
Take a planar diagram for $L$. Numerating arbitrarily, let $x_1,\ldots,x_m$ be the directed arcs, and $\kappa_1,\ldots,\kappa_m$ the crossings.
Let $F_m$ be the free group on the letters $\tilde{x}_1,\ldots,\tilde{x}_m$, with $\tilde{x}_j$ corresponding to $x_j$.
With the ``over presentation" (see \cite{CF77} Chapter VI) for $\pi=\pi_1(L)$,
each positive crossing as in Figure \ref{fig:pm} (a) gives rise to a relator $\tilde{x}_j\tilde{x}_k\tilde{x}_j^{-1}\tilde{x}_\ell^{-1}$, and each negative crossing as in Figure \ref{fig:pm} (b) gives rise to a relator $\tilde{x}_j^{-1}\tilde{x}_k\tilde{x}_j\tilde{x}_\ell^{-1}$. By abuse of notation, we always denote the element of $\pi$ represented by a directed arc $x$ also by $x$. Let $\phi:F_m\twoheadrightarrow\pi$ be the quotient map sending $\tilde{x}_j$ to $x_j$. Let $\widetilde{M}\in\mathcal{M}_m(\mathbb{Z}F_m)$ be the matrix whose $(i,j)$-entry is $\partial\tilde{r}_i/\partial\tilde{x}_j$ (the Fox derivative),
and let $M=\check{\phi}(\widetilde{M})\in\mathcal{M}_{m}(\mathbb{Z}\pi)$.
Let
$$\alpha:\pi\twoheadrightarrow H_1(S^3-L)\cong\mathbb{Z}^n=\langle t_1,\ldots,t_n\rangle$$
denote the abelianzation, with $t_k$ the image of the $k$-th meridian.

\begin{figure}[h]
  \centering
  \includegraphics[width=0.5\textwidth]{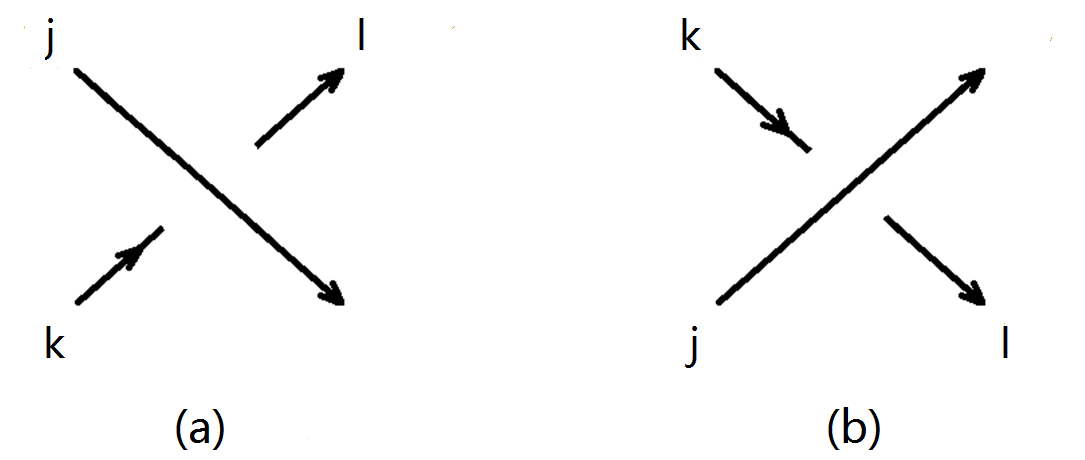}\\
  \caption{(a) a positive crossing $(x_j,x_k,x_{\ell})$; (b) a negative crossing $(x_j,x_k,x_{\ell})$} \label{fig:pm}
\end{figure}

Suppose $R$ is an integral domain. Let $R[\mathbf{t}^{\pm}]=R[t_1^{\pm 1},\ldots,t_n^{\pm 1}]$, and let $R(\mathbf{t}^{\pm})$ denote its fraction field. For $f_1,f_2\in R(\mathbf{t}^{\pm})$,
denote $f_1\doteq f_2$ if $f_2=\epsilon t_1^{e_1}\cdots t_n^{e_n}f_1$ for some $\epsilon\in R^\times$ and $e_1,\ldots,e_n\in\mathbb{Z}$;
clearly $\doteq$ is an equivalence relation.
Given a representation $\rho:\pi\to {\rm GL}(d,R)$, the obvious map
$$\pi\stackrel{\alpha\times\rho}\longrightarrow\mathbb{Z}^n\times {\rm GL}(d,R)\hookrightarrow R[\mathbf{t}^{\pm1}]\times\mathcal{M}_d(R)\to\mathcal{M}_d(R[\mathbf{t}^{\pm}])$$
can be extended by linearity to a ring homomorphism
\begin{align}
\mathbb{Z}\pi\to\mathcal{M}_d(R[\mathbf{t}^{\pm}]), \qquad c\mapsto\overline{c}, \label{eq:map}
\end{align}
Following \cite{Wa94}, define the {\it twisted Alexander polynomial} by
$$\Delta_{L,\rho}\doteq\frac{\det\left(\overline{M_{\neg i}^{\neg j}}\right)}{\det(1-\overline{x_j})}\in R(\mathbf{t}^{\pm}),$$
with the understanding that, up to equivalence, the right-hand-side of $\doteq$ is independent of all the choices.

Let $\mathbf{1}$ denote the (1-dimensional) trivial representation. It is known that
\begin{align}
\Delta_{L,\mathbf{1}}\doteq\begin{cases} \Delta_L/(t-1), & n=1, \\ \Delta_L, &n\ge 2, \end{cases}  \label{eq:ordinary}
\end{align}
where $\Delta_L$ is the ordinary Alexander polynomial if $n=1$, and is the multi-variable Alexander polynomial if $n\ge 2$.

\subsection{Montesinos tangle and Montesinos link} \label{sec:Montesinos}

\begin{figure} [h]
  \centering
  \includegraphics[width=0.27\textwidth]{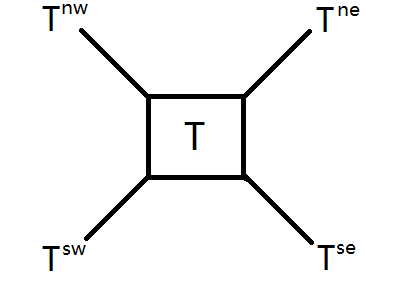}\\
  \caption{A tangle diagram with four ends} \label{fig:T}
\end{figure}

\begin{figure} [h]
  \centering
  \includegraphics[width=0.65\textwidth]{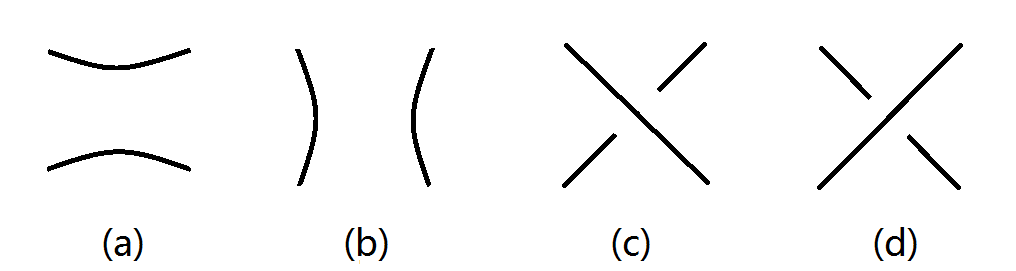}\\
  \caption{Four tangles in $\mathcal{T}_2^2$: (a) $[0]$, (b) $[\infty]$, (c) $[1]$, (d) $[-1]$}\label{fig:basic}
\end{figure}

\begin{figure}[h]
  \centering
  \includegraphics[width=0.55\textwidth]{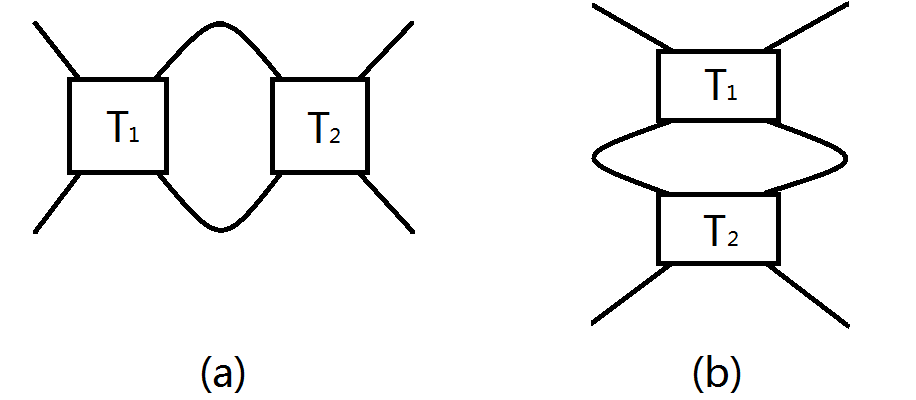}\\
  \caption{(a) $T_{1}+T_{2}$; (b) $T_{1}\ast T_{2}$} \label{fig:composition}
\end{figure}

Let $\mathcal{T}_2^2$ denote the set of tangles $T$ with four ends $T^{\rm nw}, T^{\rm ne}, T^{\rm sw}, T^{\rm se}$; see Figure \ref{fig:T}.
The simplest tangles in $\mathcal{T}_2^2$ are shown in Figure \ref{fig:basic}. Defined on $\mathcal{T}_2^2$ are horizontal composition $+$ and vertical composition $\ast$, as illustrated in Figure \ref{fig:composition}.

For $k\ne 0$, the horizontal composite of $|k|$ copies of $[1]$ (resp. $[-1]$) is denoted by $[k]$ if $k>0$ (resp. $k<0$), and the
vertical composite of $|k|$ copies of $[1]$ (resp. $[-1]$) is denoted by $[1/k]$ if $k>0$ (resp. $k<0$).
For $k_{1},\ldots,k_{r}\in\mathbb{Z}$, the {\it rational tangle} $[[k_{1}],\ldots,[k_{r}]]$ by definition is
$$\begin{cases}
[k_{1}]\ast [1/k_{2}]+\cdots+[k_{r}], &\text{if\ \ } 2\nmid r, \\
[1/k_{1}]+[k_{2}]\ast\cdots +[k_{r}], &\text{if\ \ } 2\mid r.
\end{cases}$$
Its {\it fraction} is given by the continued fraction $[[k_{1},\ldots,k_{r}]]\in\mathbb{Q}$, which is defined inductively as
$$[[k_{1}]]=k_{1}; \qquad
[[k_{1},\ldots,k_{j}]]=k_{j}+1/[[k_{1},\ldots,k_{j-1}]], \quad j\ge 2.$$
Denote $[[k_{1}],\ldots,[k_{r}]]$ as $[p/q]$ when its fraction equals $p/q$.

\begin{figure}[h]
  \centering
  \includegraphics[width=0.5\textwidth]{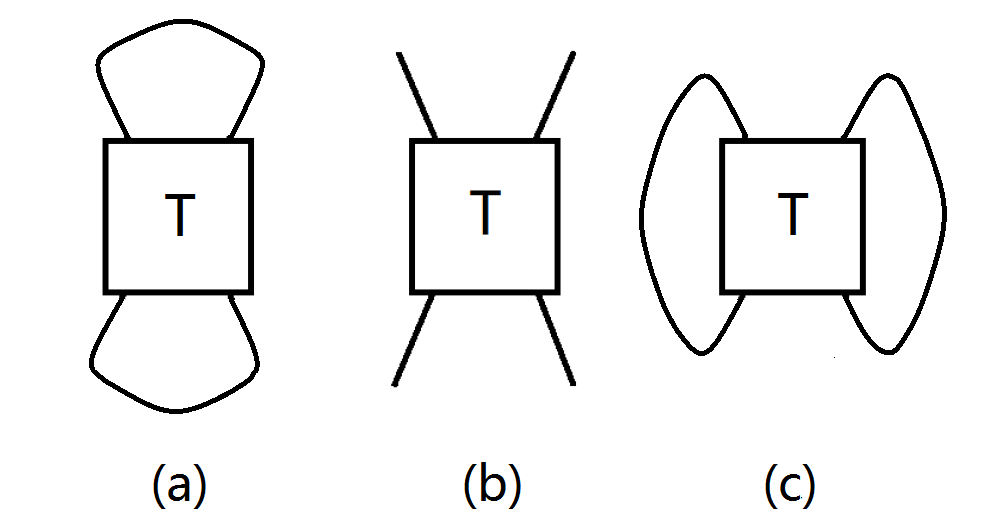}\\
  \caption{(a) the numerator $N(T)$; (b) a tangle $T$; (c) the denominator $D(T)$} \label{fig:D-N}
\end{figure}

A {\it Montesinos tangle} is one of the form $[p_1/q_1]\ast\cdots\ast[p_r/q_r]$.

Each $T\in\mathcal{T}_2^2$ gives rise to two links: the {\it numerator} $N(T)$ and the {\it denominator} $D(T)$; see Figure \ref{fig:D-N}. For $p/q\in\mathbb{Q}$, the link $N([p/q])$ is called a {\it rational link}
or {\it 2-bridge link}. In particular, $N([[k_1],[k_2]])$ is called a {\it double twist link}, and $N([[k_1],[k_2],[k_3]])$ is called a {\it triple twist link}.
A link of the form $D([p_1/q_1]\ast\cdots\ast[p_s/q_s])$ is called a {\it Montesinos link} and denoted by $M(p_1/q_1,\ldots,p_s/q_s)$;
in particular, $M(p_1,\ldots,p_s)$ is a {\it pretzel link}.

\section{The method}

The main results are (\ref{eq:TAP-N(T)}) and (\ref{eq:main}), which are formulas of TAPs for rational links and general Montesinos links, respectively.

\begin{nota}
\rm For a crossing $\kappa_i=(x_j,x_k,x_{\ell})$ as given in Figure \ref{fig:pm}, denote $j, k, \ell$ by $\overline{i}, \underline{i}, i'$, respectively.
\end{nota}

If $\kappa_i$ is positive so that $\tilde{r}_i=\tilde{x}_{\overline{i}}\tilde{x}_{\underline{i}}\tilde{x}_{\overline{i}}^{-1}\tilde{x}_{i'}^{-1}$, then
$$\frac{\partial \tilde{r}_i}{\partial\tilde{x}_{\overline{i}}}=1-\tilde{x}_{\overline{i}}\tilde{x}_{\underline{i}}\tilde{x}_{\overline{i}}^{-1}, \qquad
\frac{\partial\tilde{r}_i}{\partial\tilde{x}_{\underline{i}}}=\tilde{x}_{\overline{i}}, \qquad
\frac{\partial \tilde{r}_i}{\partial\tilde{x}_{i'}}=-\tilde{x}_{\overline{i}}\tilde{x}_{\underline{i}}\tilde{x}_{\overline{i}}^{-1}\tilde{x}_{i'}^{-1},$$
hence
$$\check{\phi}\Big(\frac{\partial \tilde{r}_i}{\partial\tilde{x}_{\overline{i}}}\Big)=1-x_{i'}, \qquad
\check{\phi}\Big(\frac{\partial\tilde{r}_i}{\partial\tilde{x}_{\underline{i}}}\Big)=x_{\overline{i}}, \qquad
\check{\phi}\Big(\frac{\partial \tilde{r}_i}{\partial\tilde{x}_{i'}}\Big)=-1,$$
so that, written in matrices,
$$\check{\phi}\left[\frac{\partial\tilde{r}_i}{\partial\tilde{x}_{\overline{i}}},\frac{\partial\tilde{r}_i}{\partial\tilde{x}_{\underline{i}}},
\frac{\partial\tilde{r}_i}{\partial\tilde{x}_{i'}}\right]D[1-x_{\overline{i}},1-x_{\underline{i}},1-x_{i'}]=(1-x_{i'})[1-x_{\overline{i}},x_{\overline{i}},-1],$$
where $D[a_1,a_2,a_3]$ denotes the $3\times 3$ diagonal matrix with diagonal entries $a_i$.

If $\kappa_i$ is negative so that $\tilde{r}_i=\tilde{x}_{\overline{i}}^{-1}\tilde{x}_{\underline{i}}\tilde{x}_{\overline{i}}\tilde{x}_{i'}^{-1}$,
then
$$\frac{\partial\tilde{r}_i}{\partial\tilde{x}_{\overline{i}}}=-\tilde{x}_{\overline{i}}^{-1}+\tilde{x}_{\overline{i}}^{-1}\tilde{x}_{\underline{i}}, \qquad
\frac{\partial\tilde{r}_i}{\partial\tilde{x}_{\underline{i}}}=\tilde{x}_{\overline{i}}^{-1},\qquad
\frac{\partial \tilde{r}_i}{\partial\tilde{x}_{i'}}=-\tilde{x}_{\overline{i}}^{-1}\tilde{x}_{\underline{i}}\tilde{x}_{\overline{i}}\tilde{x}_{i'}^{-1},$$
hence
$$\check{\phi}\Big(\frac{\partial\tilde{r}_i}{\partial\tilde{x}_{\overline{i}}}\Big)=-x_{\overline{i}}^{-1}+x_{i'}x_{\overline{i}}^{-1}, \qquad
\check{\phi}\Big(\frac{\partial\tilde{r}_i}{\partial\tilde{x}_{\underline{i}}}\Big)=x_{\overline{i}}^{-1},\qquad
\check{\phi}\Big(\frac{\partial \tilde{r}_i}{\partial\tilde{x}_{i'}}\Big)=-1,$$
so that
$$\check{\phi}\left[\frac{\partial\tilde{r}_i}{\partial\tilde{x}_{\overline{i}}},\frac{\partial\tilde{r}_i}{\partial\tilde{x}_{\underline{i}}},
\frac{\partial\tilde{r}_i}{\partial\tilde{x}_{i'}}\right]D[1-x_{\overline{i}},1-x_{\underline{i}},1-x_{i'}]=(1-x_{i'})[1-x_{\overline{i}}^{-1},x_{\overline{i}}^{-1},-1].$$

Consequently,
\begin{align}
MD=D'Q,  \label{eq:transform}
\end{align}
where
\begin{enumerate}
  \item[\rm(i)] $D\in\mathcal{M}_m(\mathbb{Z}\pi)$ is diagonal, whose $j$-th diagonal entry is $1-x_{j}$;
  \item[\rm(ii)] $D'\in\mathcal{M}_m(\mathbb{Z}\pi)$ is diagonal, whose $i$-th diagonal entry is $1-x_{i'}$;
  \item[\rm(iii)] $Q_{i,\overline{i}}=1-x_{\overline{i}}^{\epsilon}$, $Q_{i,\underline{i}}=x_{\overline{i}}^{\epsilon}$, $Q_{i,i'}=-1$, where $\epsilon=1$ (resp. $\epsilon=-1$) if $\kappa_i$ is positive (resp. negative), and $Q_{i,j}=0$ for $j\notin\{\overline{i},\underline{i},i'\}$.
\end{enumerate}

As a consequence of (\ref{eq:transform}), for any $i,j$, one has $M_{\neg i}^{\neg j}D_{\neg j}^{\neg j}=(D')^{\neg i}_{\neg i}Q_{\neg i}^{\neg j}$, so
\begin{align}
\Delta_{L,\rho}
\doteq\frac{\det\left(\overline{M_{\neg i}^{\neg j}}\right)}{\det(1-\overline{x_{j}})}
\doteq\frac{\det\left(\overline{Q_{\neg i}^{\neg j}}\right)}{\det(1-\overline{x_{i'}})}
\doteq\frac{\det\left(\overline{Q_{\neg i}^{\neg j}}\right)}{\det(1-\overline{x_{\underline{i}}})},  \label{eq:Q}
\end{align}
the last equality following from that $x_{i'}$ is conjugate to $x_{\underline{i}}$.

We shall do elementary transformations to convert $Q$ into another matrix which is highly simplified.
To speak conveniently, introduce for each direct arc $z$ a formal variable $\xi_z$.
Each row of $Q$ corresponds to an equation $\sum_{j=1}^mQ_{i,j}\xi_{x_j}=0,$ and doing row-transformations is equivalent to re-writing equations of such kind.
Note that $\xi_z$ depends only on the underlying arc of $z$, so $\xi_{z'}$ also makes sense for an un-directed arc $z'$.

\begin{figure}[h]
  \centering
  \includegraphics[width=0.7\textwidth]{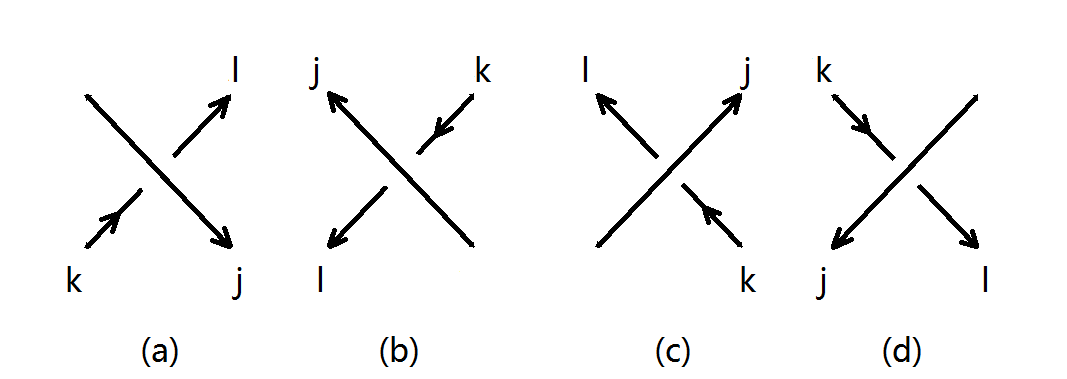}\\
  \caption{Four possibilities of a positive crossing}
  \label{fig:positive}
\end{figure}
\begin{figure}[h]
  \centering
  \includegraphics[width=0.7\textwidth]{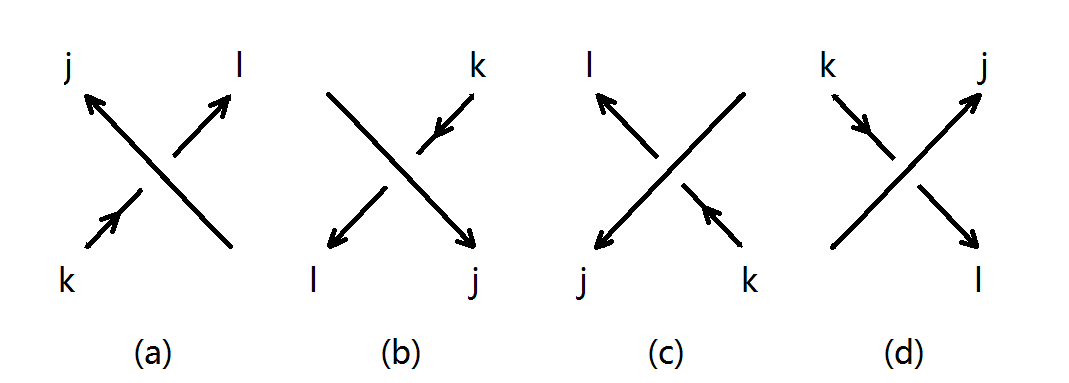}\\
  \caption{Four possibilities of a negative crossing}\label{fig:negative}
\end{figure}

For each crossing, let $\xi_{\rm nw}$ denote the variable associated to the northwest arc, and so on.
Adopt the following abbreviations:
$$\xi_{\mathbf{w}}=\left[\begin{array}{cc} \xi_{\rm nw} \\ \xi_{\rm sw} \end{array}\right], \quad
\xi_{\mathbf{e}}=\left[\begin{array}{cc} \xi_{\rm ne} \\ \xi_{\rm se} \end{array}\right], \quad
\xi_{\mathbf{n}}=\left[\begin{array}{cc} \xi_{\rm nw} \\ \xi_{\rm ne} \end{array}\right], \quad
\xi_{\mathbf{s}}=\left[\begin{array}{cc} \xi_{\rm sw} \\ \xi_{\rm se} \end{array}\right].$$

For a positive crossing shown in Figure \ref{fig:positive}, $\xi_\ell=(1-x_j)\xi_j+x_j\xi_k$, so in case (a), (b), (c), (d), respectively,
\begin{align}
&\xi_{\mathbf{e}}=\left[\begin{array}{cc} 1-x_j & x_j \\ 1 & 0 \end{array}\right]\xi_{\mathbf{w}},
&&\xi_{\mathbf{s}}=\left[\begin{array}{cc} 1-x_j^{-1} & x_j^{-1} \\ 1 & 0 \end{array}\right]\xi_{\mathbf{n}}, \label{eq:positive1} \\
&\xi_{\mathbf{e}}=\left[\begin{array}{cc} 1-x_j^{-1} & x_j^{-1} \\ 1 & 0 \end{array}\right]\xi_{\mathbf{w}},
&&\xi_{\mathbf{s}}=\left[\begin{array}{cc} 1-x_j & x_j \\ 1 & 0 \end{array}\right]\xi_{\mathbf{n}}, \label{eq:positive2}  \\
&\xi_{\mathbf{e}}=\left[\begin{array}{cc} 0 & 1 \\ x_j^{-1} & 1-x_j^{-1} \end{array}\right]\xi_{\mathbf{w}},
&&\xi_{\mathbf{s}}=\left[\begin{array}{cc} 0 & 1 \\ x_j^{-1} & 1-x_j^{-1} \end{array}\right]\xi_{\mathbf{n}}, \label{eq:positive3}  \\
&\xi_{\mathbf{e}}=\left[\begin{array}{cc} 0 & 1 \\ x_j & 1-x_j \end{array}\right]\xi_{\mathbf{w}},
&&\xi_{\mathbf{s}}=\left[\begin{array}{cc} 0 & 1 \\ x_j & 1-x_j \end{array}\right]\xi_{\mathbf{n}}. \label{eq:positive4}
\end{align}
For a negative crossing shown in Figure \ref{fig:negative}, $\xi_\ell=(1-x_j^{-1})\xi_j+x_j^{-1}\xi_k$, so in case (a), (b), (c), (d), respectively,
\begin{align}
&\xi_{\mathbf{e}}=\left[\begin{array}{cc} 1-x_j^{-1} & x_j^{-1} \\ 1 & 0 \end{array}\right]\xi_{\mathbf{w}},
&&\xi_{\mathbf{s}}=\left[\begin{array}{cc} 1-x_j & x_j \\ 1 & 0 \end{array}\right]\xi_{\mathbf{n}},  \label{eq:negative1}  \\
&\xi_{\mathbf{e}}=\left[\begin{array}{cc} 1-x_j & x_j \\ 1 & 0 \end{array}\right]\xi_{\mathbf{w}},
&&\xi_{\mathbf{s}}=\left[\begin{array}{cc} 1-x_j^{-1} & x_j^{-1} \\ 1 & 0 \end{array}\right]\xi_{\mathbf{n}}, \label{eq:negative2} \\
&\xi_{\mathbf{e}}=\left[\begin{array}{cc} 0 & 1 \\ x_j & 1-x_j \end{array}\right]\xi_{\mathbf{w}},
&&\xi_{\mathbf{s}}=\left[\begin{array}{cc} 0 & 1 \\ x_j & 1-x_j \end{array}\right]\xi_{\mathbf{n}}, \label{eq:negative3} \\
&\xi_{\mathbf{e}}=\left[\begin{array}{cc} 0 & 1 \\ x_j^{-1} & 1-x_j^{-1} \end{array}\right]\xi_{\mathbf{w}},
&&\xi_{\mathbf{s}}=\left[\begin{array}{cc} 0 & 1 \\ x_j^{-1} & 1-x_j^{-1} \end{array}\right]\xi_{\mathbf{n}}. \label{eq:negative4}
\end{align}

\begin{nota}
\rm For a rational tangle $T=[[k_1],\ldots,[k_r]]$, introduce the ``convenient labeling" for some of its directed arcs; see Figure \ref{fig:rational}.
\end{nota}

\begin{figure}[h]
  \centering
  \includegraphics[width=0.85\textwidth]{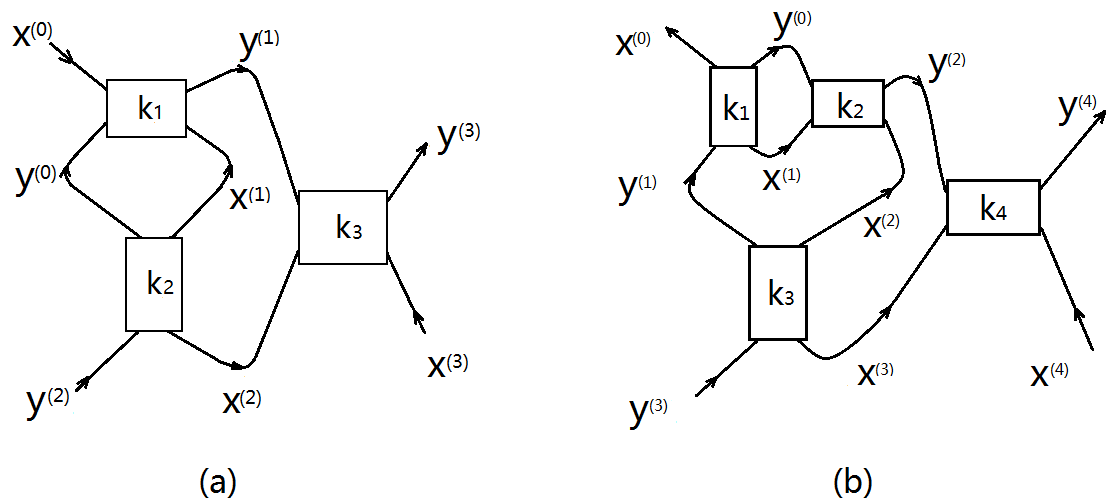}\\
  \caption{The ``convenient labeling" of a rational tangle: (a) $r$ is odd; (b) $r$ is even}\label{fig:rational}
\end{figure}

For $k\in\mathbb{Z}$ and $a,b\in\pi$, put
\begin{align*}
u_{k}(a,b)&=\begin{cases} [h]_{ab}a(1-b), &k=2h, \\ [h+1]_{ab}a-[h]_{ab}ab, &k=2h+1, \end{cases}  \\
G_{k}(a,b)&=\left[\begin{array}{cc} 1-u_{k}(a,b) & u_{k}(a,b) \\ 1-u_{k-1}(a,b) & u_{k-1}(a,b) \\ \end{array}\right].
\end{align*}
More explicitly,
\begin{align*}
G_{2h}(a,b)&=\left[\begin{array}{cc} 1-[h]_{ab}(a-ab) & [h]_{ab}(a-ab) \\ -[h]_{ab}(a-1) & 1+[h]_{ab}(a-1) \\ \end{array}\right], \\
G_{2h+1}(a,b)&=\left[\begin{array}{cc} -[h+1]_{ab}(a-1) & 1+[h+1]_{ab}(a-1) \\ 1-[h]_{ab}(a-ab) & [h]_{ab}(a-ab) \\ \end{array}\right].
\end{align*}

Using (\ref{eq:positive1})--(\ref{eq:negative4}) iteratively, we obtain
$$\left[\begin{array}{cc} \xi_{y^{(j)}} \\ \xi_{x^{(j)}}\end{array}\right]=G_{k_j}(y^{(j-2)},x^{(j-1)})\left[\begin{array}{cc} \xi_{y^{(j-2)}} \\ \xi_{x^{(j-1)}}\end{array}\right],$$

The following is tautological:
\begin{lem} \label{lem:algorithm}
Suppose $T=[[k_1],\ldots,[k_r]]$. Let $\xi'=\xi_{x^{(0)}}, \xi''=\xi_{y^{(0)}}$. Then
$$\xi_{T^{\rm ne}}=(1-b^{\rm ne})\xi'+b^{\rm ne}\xi'', \quad \xi_{T^{\rm sw}}=(1-b^{\rm sw})\xi'+b^{\rm sw}\xi'', \quad \xi_{T^{\rm se}}=(1-b^{\rm se})\xi'+b^{\rm se}\xi'',$$
with $b^{\rm ne}=\eta_{y^{(r)}}, b^{\rm sw}=\eta_{y^{(r-1)}}, b^{\rm se}=\eta_{x^{(r)}}$, where $\eta_{x^{(j)}}, \eta_{y^{(j)}}$ are recursively given by
$\eta_{y^{(0)}}=1$, $\eta_{y^{(1)}}=u_{k_1}(x^{(0)},y^{(0)})$, $\eta_{x^{(1)}}=u_{k_1-1}(x^{(0)},y^{(0)})$, and
$$\left[\begin{array}{cc} \eta_{y^{(j)}} \\ \eta_{x^{(j)}}\end{array}\right]=G_{k_j}(y^{(j-2)},x^{(j-1)})\left[\begin{array}{cc} \eta_{y^{(j-2)}} \\ \eta_{x^{(j-1)}}\end{array}\right], \qquad j\ge 2.$$
\end{lem}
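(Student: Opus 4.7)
The proof is a direct induction on $j$, exploiting an observation that the author has built into the setup: in every one of the crossing formulas \eqref{eq:positive1}–\eqref{eq:negative4}, the two entries in each row of the $2\times 2$ transition matrix sum to $1$ in $\mathbb{Z}\pi$. Hence each output $\xi$ is an affine combination of the two input $\xi$'s. Propagating this through the whole tangle, every $\xi_z$ is uniquely of the form $(1-\eta_z)\xi'+\eta_z\xi''$ for some $\eta_z\in\mathbb{Z}\pi$, with $\eta_{x^{(0)}}=0$ and $\eta_{y^{(0)}}=1$. Once this affine-structure is recorded, the lemma reduces to verifying the claimed recursion for $\eta_{x^{(j)}},\eta_{y^{(j)}}$.

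For the base case $j=1$, I would iterate the appropriate formula from \eqref{eq:positive1}–\eqref{eq:negative4} through the $|k_1|$ crossings making up the block $[k_1]$, starting from the initial vector $(\eta_{y^{(0)}},\eta_{x^{(0)}})=(1,0)$. A direct calculation, using the explicit form of $u_{k}(a,b)$ for $k=k_1$ and $k=k_1-1$, produces the stated values $\eta_{y^{(1)}}=u_{k_1}(x^{(0)},y^{(0)})$ and $\eta_{x^{(1)}}=u_{k_1-1}(x^{(0)},y^{(0)})$.

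The inductive step $j\ge 2$ is the heart of the argument. Inside a single twist region $[k_j]$, only two arcs serve as over-arcs of the individual crossings, and the convenient labeling of Figure \ref{fig:rational} identifies them as $y^{(j-2)}$ and $x^{(j-1)}$. Multiplying out the product of $|k_j|$ elementary $2\times 2$ matrices coming from \eqref{eq:positive1}–\eqref{eq:negative4}, the entries collapse into geometric sums of the form $1+(ab)+(ab)^2+\cdots=[h]_{ab}$ with $a=y^{(j-2)}$, $b=x^{(j-1)}$, and comparing with the explicit formulas $u_{2h}(a,b)=[h]_{ab}a(1-b)$ and $u_{2h+1}(a,b)=[h+1]_{ab}a-[h]_{ab}ab$ yields exactly $G_{k_j}(y^{(j-2)},x^{(j-1)})$. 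Since $\xi_z$ and $\eta_z$ transform by the same affine law, the same matrix equation holds for $(\eta_{y^{(j)}},\eta_{x^{(j)}})$ in place of $(\xi_{y^{(j)}},\xi_{x^{(j)}})$.

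The computation itself is routine, so the only real obstacle is bookkeeping: distinguishing the four pictures in Figures \ref{fig:positive} and \ref{fig:negative}, accounting for whether $j$ is odd or even (which swaps horizontal and vertical composition), and handling the sign of $k_j$. One verifies in each case that the product of elementary matrices is insensitive to these choices once expressed in terms of $u_k$, thanks to the definition of $[k]_a$ for negative $k$. Finally, the closing identifications $b^{\rm ne}=\eta_{y^{(r)}}$, $b^{\rm sw}=\eta_{y^{(r-1)}}$, $b^{\rm se}=\eta_{x^{(r)}}$ are immediate from Figure \ref{fig:rational}, where the three ends $T^{\rm ne},T^{\rm sw},T^{\rm se}$ are precisely the arcs carrying the convenient labels $y^{(r)},y^{(r-1)},x^{(r)}$.
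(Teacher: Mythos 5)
Your proposal is correct and follows essentially the same route as the paper, which offers no proof at all beyond declaring the lemma ``tautological'' once the recursion $\bigl[\xi_{y^{(j)}},\xi_{x^{(j)}}\bigr]^{\mathsf T}=G_{k_j}(y^{(j-2)},x^{(j-1)})\bigl[\xi_{y^{(j-2)}},\xi_{x^{(j-1)}}\bigr]^{\mathsf T}$ has been asserted; your observation that every transition matrix in \eqref{eq:positive1}--\eqref{eq:negative4} has rows summing to $1$, so that the coefficients $\eta_z$ in $\xi_z=(1-\eta_z)\xi'+\eta_z\xi''$ obey the identical recursion, is exactly the content the author leaves implicit. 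You in fact supply more than the paper does by sketching why the product of the $|k_j|$ elementary matrices collapses to $G_{k_j}$; only take care with the ordering of the base-case vector, since $G_{k_1}(x^{(0)},y^{(0)})$ must act on $(\eta_{x^{(0)}},\eta_{y^{(0)}})=(0,1)$ to return $(u_{k_1},u_{k_1-1})$.
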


\bigskip

Suppose $L=N(T)$, with $T=[[k_1],\ldots,[k_r]]$. Let $m=|k_1|+\cdots+|k_r|$. Define $Q_T\in\mathcal{M}_m^{m+2}(\mathbb{Z}\pi)$ by setting each row to fit the condition (iii) following (\ref{eq:transform}).
Then each $\xi_{x_j}$ can be written as $(1-b_j)\xi'+b_j\xi''$ for some $b_j\in\mathbb{Z}\pi$. As effects on matrices, re-numbering columns beforehand if necessary, $Q_T$ can be converted via a series of row transformations into
\begin{align*}
UQ_T=
\left[\begin{array}{ccccc}
1-b_1 & b_1 & -1 & \ & \  \\
\vdots & \vdots & \ & \ddots & \ \\
1-b_{m} & b_{m} & \ & \ & -1 \\
\end{array}\right],
\end{align*}
where $b_{i^{\rm ne}}=b^{\rm ne}, b_{i^{\rm sw}}=b^{\rm sw}, b_{i^{\rm se}}=b^{\rm se}$ for some $i^{\rm ne}, i^{\rm sw}, i^{\rm se}\in\{1,\ldots,m\}$.

More precisely, as seen from (\ref{eq:positive1})--(\ref{eq:negative4}), each of the row transformations used is or can be decomposed into {\it special transformations}, by which we mean those belong to the following:
\begin{enumerate}
  \item[\rm1)] multiplying the $i$-th row by $-x_{i'}^{\pm1}$, for some $i$;
  \item[\rm2)] adding $(1-x_{\overline{i}}^{\pm1})$ times the $i$-th row to or subtracting $(1-x_{\overline{i}}^{\pm1})$ times the $i$-th row from another row, for some $i$.
\end{enumerate}

\begin{lem} \label{lem:special}
Suppose $f\in R[\mathbf{t}^{\pm1}]$, and suppose $P\in{\rm GL}(m,\mathbb{Z}\pi)$ satisfies
$$\det\Big(\overline{P_{\neg i}^{\neg j}}\Big)\doteq\det(1-\overline{x_{i'}})f$$
for all $i,j$, then so does $VP$, for any elementary matrix $V$ representing a special row transformation.
\end{lem}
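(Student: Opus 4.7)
The plan is a case analysis on the type of the special row transformation $V$ and, within each type, on the deleted-row index $i$. I work in the $md \times md$ block-matrix picture of $\overline{P}$ and $\overline{VP}$ so that determinants can be analysed by ordinary multilinearity over the commutative ring $R[\mathbf{t}^{\pm}]$.

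For type~1, when $V$ scales row $k$ by $s = -x_{k'}^{\pm 1}$, the case $i = k$ is immediate because the scaled row is deleted, so $\overline{(VP)^{\neg j}_{\neg k}} = \overline{P^{\neg j}_{\neg k}}$. For $i \neq k$, the minor $\overline{(VP)^{\neg j}_{\neg i}}$ has its $k$-th block row left-multiplied by $\overline{s}$, scaling its determinant by $\det \overline{s} = (-1)^{d} (\det \overline{x_{k'}})^{\pm 1}$. Since $\overline{x_{k'}} = \alpha(x_{k'})\rho(x_{k'})$ with $\alpha(x_{k'})$ a single variable $t_{n}$ and $\rho(x_{k'}) \in \mathrm{GL}(d, R)$, this factor lies in $R[\mathbf{t}^{\pm}]^{\times}$ and is absorbed by $\doteq$.

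For type~2, when $V$ adds $c = \pm(1 - x_{\overline{k}}^{\pm 1})$ times row $k$ to row $\ell$, block row $\ell$ of $\overline{VP}$ equals block row $\ell$ of $\overline{P}$ plus $\overline{c}$ times block row $k$ of $\overline{P}$. The subcase $i = \ell$ is immediate. For $i \notin \{k, \ell\}$, expand $\det \overline{(VP)^{\neg j}_{\neg i}}$ by multilinearity in the $d$ scalar rows that comprise block row $\ell$: each such row is its original value plus an $\overline{c}$-linear combination of the $d$ rows of $\overline{P^{\neg j}_{k}}$. Because block row $k$ is still present (since $i \neq k$), any summand that substitutes a row of block $k$ into block $\ell$ creates two identical rows in the matrix and vanishes; only the untouched summand survives, giving $\det \overline{(VP)^{\neg j}_{\neg i}} = \det \overline{P^{\neg j}_{\neg i}}$.

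The critical subcase is $i = k$, where block row $k$ is deleted and the repetition argument no longer applies. I plan to write the modified minor as the rank-$d$ perturbation $\overline{(VP)^{\neg j}_{\neg k}} = \overline{P^{\neg j}_{\neg k}} + E\,\overline{c}\,F$, with $F = \overline{P^{\neg j}_{k}}$ and $E$ the $(m-1)d \times d$ block-column indicator at the position of $\ell$ in the minor, then invoke the matrix determinant lemma to obtain
\[
\det \overline{(VP)^{\neg j}_{\neg k}} = \det \overline{P^{\neg j}_{\neg k}} \cdot \det\bigl(I_{d} + \overline{c}\,F\,(\overline{P^{\neg j}_{\neg k}})^{-1}\,E\bigr).
\]
The first factor is $\doteq \det(1 - \overline{x_{k'}}) f$ by hypothesis, so the task reduces to showing the second factor lies in $R[\mathbf{t}^{\pm}]^{\times}$. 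My plan is to use Cramer's rule to expand $F\,(\overline{P^{\neg j}_{\neg k}})^{-1}\,E$ as a combination of sub-minors of $\overline{P^{\neg j}_{\neg k}}$, relate these via Laplace expansion to the block-minors $\det \overline{P^{\neg j'}_{\neg i'}}$ appearing in the hypothesis, and exploit the uniform $\doteq$-identities across all $(i', j')$ to collapse the correction to a monomial unit. This identification is the main obstacle: the other cases are routine multilinear algebra, whereas $i = k$ genuinely requires both the invertibility of $\overline{P^{\neg j}_{\neg k}}$ (guaranteed by $f \neq 0$) and the full ``for all $i, j$'' strength of the hypothesis.
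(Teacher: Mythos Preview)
The paper's own proof is a single line: it declares that it suffices to check that $\det\big(\overline{(VP)_{\neg i}^{\neg j}}\big)$ is divisible by $\det(1-\overline{x_{i'}})f$, ``which can be verified directly.'' Your case analysis is exactly the natural way to make such a direct verification explicit, and your handling of type~1 transformations and of type~2 with $i\neq k$ is correct and essentially what the paper leaves implicit.

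The genuine gap is in your critical subcase, type~2 with $i=k$. Your matrix-determinant-lemma factorisation
\[
\det\big(\overline{(VP)_{\neg k}^{\neg j}}\big)=\det\big(\overline{P_{\neg k}^{\neg j}}\big)\cdot\det\bigl(I_d+\overline{c}\,F\,(\overline{P_{\neg k}^{\neg j}})^{-1}E\bigr)
\]
is valid, but the stated target --- that the second factor lies in $R[\mathbf{t}^{\pm}]^{\times}$, i.e.\ is a monomial unit --- is not something the hypothesis can deliver. Already for $d=1$, plain multilinearity in row $\ell$ gives
\[
\det\big(\overline{(VP)_{\neg k}^{\neg j}}\big)=\det\big(\overline{P_{\neg k}^{\neg j}}\big)\pm\overline{c}\,\det\big(\overline{P_{\neg \ell}^{\neg j}}\big),
\]
so the correction factor is
\[
1\pm\bigl(1-\overline{x_{\overline{k}}}^{\,\pm 1}\bigr)\cdot u\cdot\frac{1-\overline{x_{\ell'}}}{1-\overline{x_{k'}}}
\]
for some monomial unit $u$; when $x_{k'}$, $x_{\ell'}$, $x_{\overline{k}}$ lie on three distinct components this is visibly not a monomial unit. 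The ``for all $i,j$'' hypothesis only pins down each block minor up to an \emph{independent and unspecified} monomial unit, and no amount of Cramer/Laplace bookkeeping will manufacture the cancellation needed to collapse such a quotient to a unit. So the plan as written cannot close the case. You should either aim for the weaker target the paper actually names --- divisibility by $\det(1-\overline{x_{k'}})f$ --- and then argue separately (for instance by applying the same reasoning to $V^{-1}$, which is again a special transformation) that divisibility both ways forces $\doteq$; or else bring in structure specific to $Q$ and its transforms that the abstract hypothesis on $P$ does not encode.
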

\begin{proof}
It suffices to show that $\det\big(\overline{(VP)_{\neg i}^{\neg j}}\big)$ can be divided by $\det(1-\overline{x_{i'}})f$, which can be verified directly.
\end{proof}

Note that $UQ$ can be obtained from $UQ_T$ by deleting the columns corresponding to $T^{\rm ne}$, $T^{\rm se}$, and modifying the $i^{\rm ne}$-th and $i^{\rm se}$-th rows respectively according to
$\xi_{T^{\rm ne}}=\xi'$ and $\xi_{T^{\rm se}}=\xi_{T^{\rm sw}}$. Precisely, the $i^{\rm ne}$-th row of $UQ$ is $(-b^{\rm ne},b^{\rm ne},0,\ldots,0)$, and the $i^{\rm se}$-th row
is $(b^{\rm sw}-b^{\rm se},b^{\rm se}-b^{\rm sw},0,\ldots,0)$.
This shows
\begin{align}
\det\big(\overline{(UQ)_{\neg i^{\rm se}}^{\neg 1}}\big)\doteq\det\big(\overline{b^{\rm ne}}\big).  \label{eq:det-rational-1}
\end{align}
We already know (retaining (\ref{eq:Q})) that $\det\big(\overline{Q_{\neg i}^{\neg j}}\big)\doteq\det(1-\overline{x_{i'}})\Delta_{L,\rho}$ for all $i,j$, hence by Lemma \ref{lem:special},
\begin{align}
\det\big(\overline{Q_{\neg i^{\rm se}}^{\neg 1}}\big)\doteq\det\big(\overline{(UQ)_{\neg i^{\rm se}}^{\neg 1}}\big)\doteq\det\big(\overline{b^{\rm ne}}\big).  \label{eq:det-rational-2}
\end{align}
Thus by (\ref{eq:Q}),
\begin{align}
\Delta_{N(T),\rho}\doteq\frac{\det\big(\overline{b^{\rm ne}}\big)}{\det(1-\overline{x^{\rm se}})},  \label{eq:TAP-N(T)}
\end{align}
where $x^{\rm se}$ is the directed arc whose underlying arc is $T^{\rm se}$.

\medskip

Now suppose $L=D(T)$, with $T=T_1\ast\cdots\ast T_s$, ($s\ge 2$), each $T_k$ being a rational tangle.
Let $m_k$ be the number of crossings in $T_k$, and let $m=m_1+\cdots+m_s$.

For each $k$, similarly as above, set up $Q_{T_k}$ and $b_k^{\rm ne}$, etc. Apply (\ref{eq:map}) to everything. To conveniently talk about row-transformations on $\overline{Q_{T_k}}$,
we introduce formal variables $\overline{\xi}_{x_j}, \overline{\xi}'_{k},\overline{\xi}''_{k}$, $\overline{\xi}_{T_k^{\rm ne}}$ and so on; abbreviate $\overline{\xi}_{T_k^{\rm ne}}$ to $\overline{\xi}_k^{\rm ne}$, etc.

Suppose $\overline{b_1^{\rm ne}},\ldots,\overline{b_s^{\rm ne}}$ are all invertible, i.e., $\det(\overline{b_k^{\rm ne}})\not\equiv0$ as elements of $R[\mathbf{t}^{\pm1}]$.
Re-write $\overline{\xi}_k^{\rm ne}=(1-\overline{b_k^{\rm ne}})\overline{\xi}'_k+\overline{b_k^{\rm ne}}\overline{\xi}''_{k}$ as
\begin{align}
\overline{\xi}''_{k}=(1-\overline{b_k^{\rm ne}}^{-1})\overline{\xi}'_k+\overline{b_k^{\rm ne}}^{-1}\overline{\xi}_k^{\rm ne}.  \label{eq:linear-combin}
\end{align}
With $\overline{\xi}''_{k}$ substituted, $\overline{\xi}_{x_j}$ for each $x_j$ in $T_k$ can be expressed as a linear combination of $\overline{\xi}'_k$ and $\overline{\xi}_k^{\rm ne}$;
in particular,
$$\left[\begin{array}{cc} \overline{\xi}_k^{\rm sw} \\ \overline{\xi}_k^{\rm se} \end{array}\right]=
\left[\begin{array}{cc} 1-\overline{b_k^{\rm sw}}\overline{b_k^{\rm ne}}^{-1} & \overline{b_k^{\rm sw}}\overline{b_k^{\rm ne}}^{-1} \\
1-\overline{b_k^{\rm se}}\overline{b_k^{\rm ne}}^{-1} & \overline{b_k^{\rm se}}\overline{b_k^{\rm ne}}^{-1}\end{array}\right]
\left[\begin{array}{cc} \overline{\xi}'_k \\ \overline{\xi}_k^{\rm ne} \end{array}\right].$$
The effect on $\overline{Q_{T_k}}$ is to multiply some $\overline{U}_k\in{\rm GL}(m_k,\mathcal{M}(d,R(\mathbf{t}^{\pm})))$ on the left. Note that $\det(\overline{U}_k)\doteq\det(\overline{b_k^{\rm ne}})^{-1}$, as we have multiplied $\overline{b_k^{\rm ne}}^{-1}$ to derive (\ref{eq:linear-combin}).

Since $\overline{\xi}'_k=\overline{\xi}_{k-1}^{\rm sw}$ and $\overline{\xi}_k^{\rm ne}=\overline{\xi}_{k-1}^{\rm se}$ for $k=2,\ldots,s$, by further substituting, we can re-write each $\overline{\xi}_{x_j}$ as a linear combination of $\overline{\xi}'_1$ and $\overline{\xi}_1^{\rm ne}$. What has been shown is that, re-numbering columns beforehand if necessary, we may take $\overline{U}\in{\rm GL}(m,\mathcal{M}(d,R(\mathbf{t}^{\pm})))$ such that $$\det(\overline{U})\doteq\prod_{k=1}^s\det(\overline{b_k^{\rm ne}})^{-1}, \qquad
\overline{U}\overline{Q}=\left[\begin{array}{ccc} \star & -I(m-2) & \ \\ B & \ & -I(2) \end{array}\right].$$
The last two rows of $\overline{U}\overline{Q}$ respectively express $\overline{\xi}_s^{\rm sw}$ and $\overline{\xi}_s^{\rm se}$ as a linear combination of $\overline{\xi}'_1$ and $\overline{\xi}_1^{\rm ne}$, i.e.,
$\left[\begin{array}{cc} \overline{\xi}_s^{\rm sw} \\ \overline{\xi}_s^{\rm se} \end{array}\right]=B\left[\begin{array}{cc} \overline{\xi}'_1 \\ \overline{\xi}_1^{\rm ne} \end{array}\right]$,
with
$$B=\left[\begin{array}{cc} 1-\overline{b_s^{\rm sw}}\overline{b_s^{\rm ne}}^{-1} & \overline{b_s^{\rm sw}}\overline{b_s^{\rm ne}}^{-1} \\
1-\overline{b_s^{\rm se}}\overline{b_s^{\rm ne}}^{-1} & \overline{b_s^{\rm se}}\overline{b_s^{\rm ne}}^{-1}\end{array}\right]\cdots
\left[\begin{array}{cc} 1-\overline{b_1^{\rm sw}}\overline{b_1^{\rm ne}}^{-1} & \overline{b_1^{\rm sw}}\overline{b_1^{\rm ne}}^{-1} \\
1-\overline{b_1^{\rm se}}\overline{b_1^{\rm ne}}^{-1} & \overline{b_1^{\rm se}}\overline{b_1^{\rm ne}}^{-1}\end{array}\right]=\left[\begin{array}{cc} 1-\overline{c} & \overline{c} \\ \star & \star \end{array}\right],$$
where $\overline{c}=\overline{p}_1+\overline{p}_2\overline{q}_1+\cdots+\overline{p}_s\overline{q}_{s-1}\cdots\overline{q}_1$, and
$$\overline{p}_k=\overline{b_k^{\rm sw}}\overline{b_k^{\rm ne}}^{-1}, \qquad \overline{q}_k=(\overline{b_k^{\rm se}}-\overline{b_k^{\rm sw}})\overline{b_k^{\rm ne}}^{-1}.$$
Equating $\overline{\xi}_s^{\rm sw}$, $\overline{\xi}_s^{\rm se}$ with $\overline{\xi}'_1$, $\overline{\xi}_1^{\rm ne}$, respectively, similarly as (\ref{eq:det-rational-1}), we obtain
$$\det(\overline{(UQ)_{\neg m}^{\neg 1}})\doteq\det(\overline{c})\cdot\prod\limits_{k=1}^s\det(\overline{b^{\rm ne}_{k}}).$$
Note that in above, each of row transformations other than multiplying via $\overline{b_k^{\rm ne}}^{-1}$ can be decomposed into special ones. Similarly as (\ref{eq:det-rational-2}), by Lemma \ref{lem:special},
\begin{align}
\det(\overline{Q_{\neg m}^{\neg 1}})\doteq\det(\overline{(UQ)_{\neg m}^{\neg 1}})\doteq\det(\overline{c})\cdot\prod\limits_{k=1}^s\det(\overline{b^{\rm ne}_{k}}). \label{eq:main-det}
\end{align}

Therefore, letting $x_s^{\rm se}$ denote the directed arc whose underlying arc is $T_s^{\rm se}$,
\begin{align}
\Delta_{L,\rho}\doteq\frac{\prod_{k=1}^s\det(\overline{b^{\rm ne}_{k}})}{\det(1-\overline{x_s^{\rm se}})}\det(\overline{p}_1+\overline{p}_2\overline{q}_1+\cdots+\overline{p}_s\overline{q}_{s-1}\cdots\overline{q}_1).  \label{eq:main}
\end{align}
In particular, when $s=3$,
\begin{align}
\Delta_{L,\rho}\doteq\frac{\det(\overline{b_2^{\rm ne}})\det(\overline{b_3^{\rm ne}})}{\det(1-\overline{x_3^{\rm se}})}
\det\Big(\overline{b_1^{\rm sw}}+\big(\overline{b_2^{\rm sw}}+\overline{b_3^{\rm sw}}\overline{b_3^{\rm ne}}^{-1}(\overline{b_2^{\rm se}}-\overline{b_2^{\rm sw}})\big)
\overline{b_2^{\rm ne}}^{-1}(\overline{b_1^{\rm se}}-\overline{b_1^{\rm sw}})\Big).  \label{eq:3-strand}
\end{align}

To deal with the ``degenerate" case when $\det(\overline{b^{\rm ne}_{k}})\equiv 0$ for some $k$, we introduce auxiliary variables $\varepsilon_k$ that commute with everything, replace $\overline{b^{\rm ne}_{k}}$ by $\overline{b^{\rm ne}_{k}}+\varepsilon_kI$, and run the process from (\ref{eq:linear-combin}) to (\ref{eq:main-det}). Then we arrive at
\begin{align}
\det(\overline{c}')\cdot\prod\limits_{k=1}^s\det(\overline{b^{\rm ne}_{k}}+\varepsilon_kI), \label{eq:degenerate}
\end{align}
(where $\overline{c}'$ is obtained from $\overline{c}$ via replacing $\overline{b^{\rm ne}_{k}}$ by $\overline{b^{\rm ne}_{k}}+\varepsilon_kI$), which can be expanded as a polynomial in $\varepsilon_1,\ldots,\varepsilon_n$, since what is being computed is the determinant of some matrix whose entries are all such polynomials. Setting $\varepsilon_1=\cdots=\varepsilon_n=0$ in (\ref{eq:degenerate}) and dividing by $\det(1-\overline{x_s^{\rm se}})$, we will obtain an expression for $\Delta_{L,\rho}$.

\begin{rmk}
\rm We believe that each $\overline{b^{\rm ne}_{k}}$ is actually always invertible, but temporarily cannot prove this.
\end{rmk}

\begin{rmk}
\rm Note that $D(T_1\ast\cdots\ast T_s)$ is the same as $D(T_2\ast\cdots\ast T_s\ast T_1)$, so the sub-indices in (\ref{eq:main}), (\ref{eq:3-strand}) can be cyclically permuted.

In particular, if $b_i^{\rm sw}=1$ for some $i\in\{1,2,3\}$, then with cyclical notations adopted, (\ref{eq:3-strand}) can be simplified into
\begin{align}
\Delta_{L,\rho}\doteq\frac{\det(\overline{b_{i-1}^{\rm ne}})}{\det(1-\overline{x_{i-1}^{\rm se}})}
\det\Big(\overline{b_{i+1}^{\rm ne}}+(\overline{b_i^{\rm se}}-1)\big(\overline{b_{i+1}^{\rm sw}}+\overline{b_{i-1}^{\rm sw}}\overline{b_{i-1}^{\rm ne}}^{-1}(\overline{b_{i+1}^{\rm se}}-\overline{b_{i+1}^{\rm sw}})\big)\Big).  \label{eq:3-strand-simplified}
\end{align}
This will be useful in practical computations, as done in the next section.
\end{rmk}

\section{Computations}

\begin{exmp} \label{exmp:triple-twist}
\rm Let $K$ be the triple twist knot $N([[2h_1],[2h_2],[2h_3+1]])$ given in Figure \ref{fig:triple-twist} (a).
Apply Lemma \ref{lem:algorithm} to compute $\eta_{y^{(0)}}=1$,
\begin{align*}
&\eta_{y^{(1)}}=u_{2h_1}(x,y)=[h_1]_{xy}(x-xy), \qquad \eta_{x^{(1)}}=u_{2h_1-1}(x,y)=w_1+1, \\
&\left[\begin{array}{cc} \eta_{y^{(2)}} \\ \eta_{x^{(2)}} \end{array}\right]=G_{2h_2}(y,x_1^{-1})\left[\begin{array}{cc} \eta_{y^{(0)}} \\ \eta_{x^{(1)}} \end{array}\right]=\left[\begin{array}{cc} 1+[h_2]_{yx_1^{-1}}(y-yx_1^{-1})w_1 \\ 1+(w_2+1)w_1 \end{array}\right], \\
&\left[\begin{array}{cc} \eta_{y^{(3)}} \\ \eta_{x^{(3)}} \end{array}\right]=G_{2h_3+1}(y_1,x_2)\left[\begin{array}{cc} \eta_{y^{(1)}} \\ \eta_{x^{(2)}} \end{array}\right]
=\left[\begin{array}{cc} z(w_2w_1+(xy)^{h_1})+[h_1]_{xy}(x-xy) \\ \star \\ \end{array}\right].
\end{align*}
where
$$w_1=[h_1]_{xy}(x-1), \qquad w_2=[h_2]_{yx_1^{-1}}(y-1), \qquad z=[h_3+1]_{y_1x_2}y_1-[h_3]_{y_1x_2}y_1x_2,$$
and $\star$ denotes something irrelevant to us.
By (\ref{eq:TAP-N(T)}),
\begin{align*}
&\Delta_{L,\rho}\doteq\frac{\det(\overline{\eta_{y_3}})}{\det(1-\overline{x^{\rm se}})}=\frac{\det(\overline{z}(\overline{w_2w_1}+(\overline{xy})^{h_1})+[h_1]_{\overline{xy}}(\overline{x}-\overline{xy}))}{\det(1-\overline{x})}\doteq \\
&\frac{\det\big(([h_3+1]_{\overline{y_1x_2}}\overline{y_1}-[h_3]_{\overline{y_1x_2}}\overline{y_1x_2})
([h_2]_{\overline{y}\overline{x_1}^{-1}}(\overline{y}-1)[h_1]_{\overline{x}\overline{y}}(\overline{x}-1)
+(\overline{x}\overline{y})^{h_1})+[h_1]_{\overline{x}\overline{y}}(\overline{x}-\overline{x}\overline{y})\big)}{\det(1-\overline{x})}.
\end{align*}
In particular, when $\rho=\mathbf{1}$ (the trivial representation), $\overline{x}=\overline{x_1}=\overline{x_2}=\overline{y}=\overline{y_1}=t$,
and the ordinary Alexander polynomial is easily found (referring to (\ref{eq:ordinary})):
$$\Delta_{K}\doteq ([h_3+1]_{t^2}-t[h_3]_{t^2})(h_2(t-1)^2[h_1]_{t^2}+t^{2h_1})+(1-t)[h_1]_{t^2}.$$

\begin{figure}[h]
  \centering
  \includegraphics[width=0.8\textwidth]{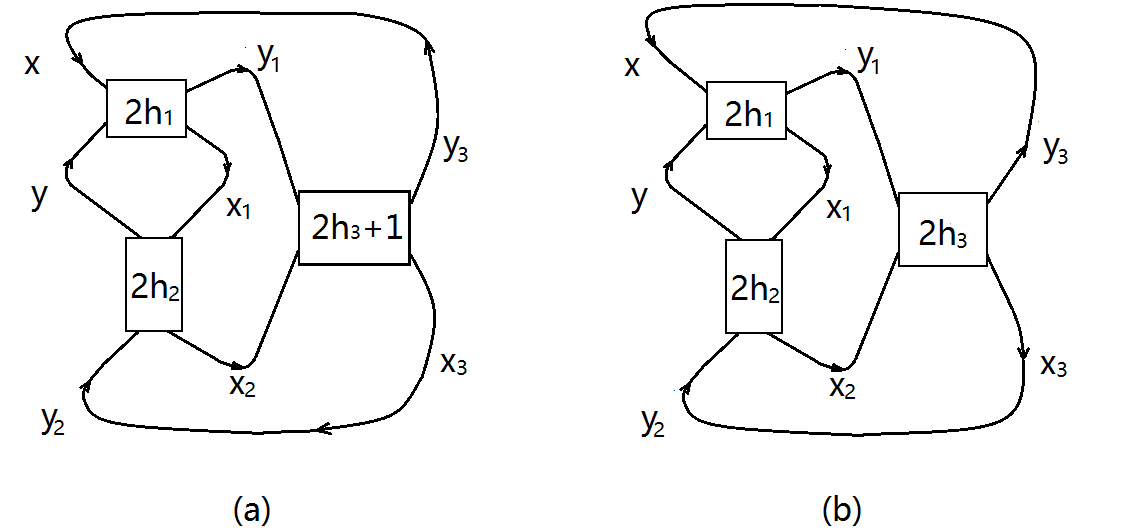}\\
  \caption{(a) The triple twist knot $N([[2h_1],[2h_2],[2h_3+1]])$; (b) The triple twist link $N([[2h_1],[2h_2],[2h_3]])$} \label{fig:triple-twist}
\end{figure}

As a special case, $N([2],[2],[3])$ (with $h_1=h_2=h_3=1$) is the knot $7_5$, so
\begin{align}
\Delta_{7_5,\rho}\doteq\frac{\det\big(\overline{y_1}(1+\overline{x_2y_1}-\overline{x_2})(1-\overline{x}-\overline{y}+\overline{y}\overline{x}+\overline{x}\overline{y})+\overline{x}(1-\overline{y})\big)}{\det(1-\overline{x})}.
\label{eq:7_5}
\end{align}
In particular, the ordinary Alexander polynomial
$$\Delta_{7_5}\doteq 2t^4-4t^3+5t^2-4t+2.$$
It is easy to see that $\pi(7_5)=\langle x,y\mid (x_2y_1)^2=yx_2y_1x_2\rangle$, with $y_1=(xy).x$, $x_2=(yx).y$.
Due to $\Delta_{7_5}(2)\equiv 0\pmod{7}$, there are homomorphisms
$$\tau_{\epsilon}:\pi(7_5)\to\mathbb{Z}_7\rtimes_2\mathbb{Z}_3=\langle\alpha,\beta\mid\alpha^7=\beta^3=1,\ \beta\alpha=\alpha^2\beta\rangle, \qquad \epsilon\in\{\pm1\},$$
sending $x$ to $\beta^{\epsilon}$ and $y$ to $\alpha\beta^{\epsilon}$.
Let $\zeta=e^{2\pi i/7}$, and let $\psi:\mathbb{Z}_7\rtimes_2\mathbb{Z}_3\to{\rm SL}(3,\mathbb{C})$ denote the representation sending $\alpha,\beta$ to $A,B$, respectively, where
$$A=\left[\begin{array}{ccc} \zeta & 0 & 0 \\ 0 & \zeta^2 & 0 \\ 0 & 0 & \zeta^4 \end{array}\right], \qquad B=\left[\begin{array}{ccc} 0 & 1 & 0 \\ 0 & 0 & 1 \\ 1 & 0 & 0 \end{array}\right].$$
Let $\rho^{7_5}_{\epsilon}:\pi(7_5)\to{\rm SL}(3,\mathbb{C})$ be the composite $\psi\circ\tau_{\epsilon}$.
We can compute $\overline{y_1}=tA^{-2\epsilon}B^{\epsilon}$, $\overline{x_2}=tA^{1+2\epsilon}B^{\epsilon}$, and then by (\ref{eq:7_5}),
\begin{align}
\Delta_{7_5,\rho^{7_5}_{\epsilon}}\doteq t^9-5t^6+5t^3-1. \label{eq:7_5-metabelian}
\end{align}
\end{exmp}

\begin{exmp}
\rm Let $L$ be the triple twist link $N([[2h_1],[2h_2],[2h_3]])$ given in Figure \ref{fig:triple-twist} (b).
Similarly as in the previous Example,
\begin{align*}
&\Delta_{L,\rho}\doteq\frac{\det([h_3]_{\overline{y_1x_2}}(\overline{y_1}-\overline{y_1x_2})(\overline{w_2w_1}+(\overline{xy})^{h_1})+[h_1]_{\overline{xy}}(\overline{x}-\overline{xy}))}{\det(1-\overline{x_3})}\doteq \\
&\frac{\det\big([h_3]_{\overline{y_1}\overline{x_2}}(\overline{y_1}-\overline{y_1}\overline{x_2})
([h_2]_{\overline{y}\overline{x_1}^{-1}}(\overline{y}-1)[h_1]_{\overline{x}\overline{y}}(\overline{x}-1)
+(\overline{x}\overline{y})^{h_1})+[h_1]_{\overline{x}\overline{y}}(\overline{x}-\overline{x}\overline{y})\big)}{\det(1-\overline{y})}.
\end{align*}
The multi-variable Alexander polynomial can be easily found by setting $\overline{x}=\overline{y_1}=t_1$ and $\overline{y}=\overline{x_1}=\overline{x_2}=t_2$:
$$\Delta_{L}\doteq [h_1]_{t_1t_2}[h_3]_{t_1t_2}(h_2(t_1-1)(t_2-1)+t_1t_2-1)+[h_1]_{t_1t_2}+[h_3]_{t_1t_2}).$$
\end{exmp}

\begin{exmp}
\rm
\begin{figure}[h]
  \centering
  \includegraphics[width=0.42\textwidth]{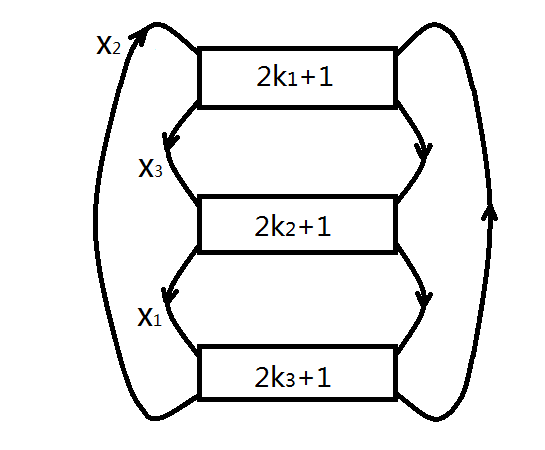}\\
  \caption{The pretzel knot $P(2k_1+1,2k_2+1,2k_3+1)$} \label{fig:pretzel}
\end{figure}

Let $K=P(2k_1+1,2k_2+1,2k_3+1)$. It is easy to see that
$$b_1^{\rm se}=u_{2k_1}(x_2,x_3^{-1}), \qquad b_2^{\rm se}=u_{2k_2}(x_3,x_1^{-1}), \qquad b_2^{\rm ne}=u_{2k_2+1}(x_3,x_1^{-1}),$$
and $b_i^{\rm sw}=1$, $i=1,2,3$.
Let
$$z_i=[k_i]_{x_{i+1}x_{i-1}^{-1}}x_{i+1}(1-x_{i-1}^{-1})-1, \qquad
w_i=[k_i+1]_{x_{i+1}x_{i-1}^{-1}}(x_{i+1}-1)+1.$$
Then by (\ref{eq:3-strand-simplified}),
\begin{align}
\Delta_{K,\rho}\doteq\frac{\det(\overline{w_3})}{\det(1-\overline{x_1})}\det\big(\overline{w_2}+\overline{z_1}(1+\overline{w_3}^{-1}\overline{z_2})\big).   \label{eq:pretzel}
\end{align}
In particular, the ordinary Alexander polynomial is
$$\Delta_K\doteq1+(1+k_1+k_2+k_3+k_1k_2+k_2k_3+k_1k_3)(t+t^{-1}-2).$$

Consider the case $k_1=k_2=k_3=1$. We have
\begin{align}
\overline{z_1}&=t\rho(x_2)-\rho(x_2x_3^{-1})-1, \label{eq:pretzel-z1} \\
\overline{z_2}&=t\rho(x_3)+t\rho(x_3x_1^{-1})-1, \label{eq:pretzel-z2} \\
\overline{w_2}&=t\rho(x_3)+t\rho(x_3x_1^{-1}x_3)+\rho(x_3x_1^{-1}), \label{eq:pretzel-w2} \\
\overline{w_3}&=t\rho(x_1)+t\rho(x_1x_2^{-1}x_1)-\rho(x_1x_2^{-1}).  \label{eq:pretzel-w3}
\end{align}
The results of \cite{Ch18} enable us to easily construct irreducible 2-dimensional representations of $\pi(K)$. For instance, the assignment
$$x_1\mapsto \left[\begin{array}{cc} 1 & 1 \\ 0 & 1 \end{array}\right], \qquad
x_2\mapsto \left[\begin{array}{cc} 1 & 0 \\ -3 & 1 \end{array}\right], \qquad
x_3\mapsto\left[\begin{array}{cc} 2 & 1 \\ -1 & 0 \end{array}\right]$$
defines a representation $\rho:\pi(K)\to{\rm SL}(2,\mathbb{F}_{11})$, as can be checked using (16)--(18) of \cite{Ch18}.
By (\ref{eq:pretzel}) and (\ref{eq:pretzel-z1})--(\ref{eq:pretzel-w3}), we can compute
$$\Delta_{P(3,3,3),\rho}\doteq\frac{(t+2)(t+4)(t^2+3t-2)}{(t-1)^2}\in\mathbb{F}_{11}(t).$$
\end{exmp}

\begin{exmp}
\rm Let $L$ be the 3-component pretzel link $P(2k_1,2k_2,2k_3)$.

\begin{figure}[h]
  \centering
  \includegraphics[width=0.4\textwidth]{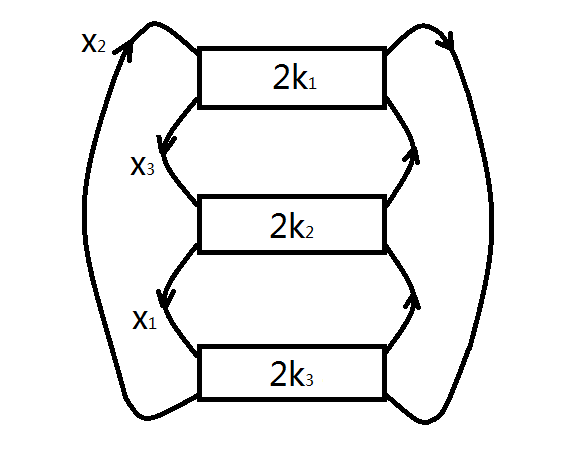}\\
  \caption{The pretzel link $P(2k_1,2k_2,2k_3)$}\label{fig:pretzel-link}
\end{figure}

For $i=1,2,3$, let $y_i=x_{i+1}x_{i-1}^{-1}$, and $v_i=[k_i]_{y_i}$, then $b_i^{\rm sw}=1$, $b_i^{\rm ne}=v_i(x_{i+1}-y_{i})$, $b_i^{\rm se}=v_i(x_{i+1}-1)+1$.
By (\ref{eq:3-strand-simplified}),
$$\Delta_{L,\rho}\doteq\frac{\det(\overline{v_3}(\overline{x_1}-\overline{y_3}))}{\det(1-\overline{x_2})}
\det\Big(\overline{v_2}(\overline{x_3}-\overline{y_2})+\overline{v_1}(\overline{x_2}-1)+\overline{v_1}(\overline{v_3}\overline{y_3})^{-1}\overline{v_2}(\overline{x_3}-1)\Big).$$
Specialized to $\rho=\mathbf{1}$ so that $\overline{x_i}=t_i$, the multi-variable Alexander polynomial is easily found:
$$\Delta_{L}\doteq \sum\limits_{i=1}^3t_{i-1}(t_i-1)[k_{i+1}]_{t_{i-1}t_i^{-1}}[k_{i-1}]_{t_it_{i+1}^{-1}}.$$

As a special case when $k_1=k_2=k_3=1$, $L=P(2,2,2)$ is the 3-chain link, a very interesting link whose complement in $S^3$ is named the ``magic manifold".
The formula can be simplified into
$$\Delta_{P(2,2,2),\rho}\doteq\det((\overline{x_2}^{-1}+\overline{x_1}^{-1})(\overline{x_3}-1)-\overline{x_2}^{-1}\overline{x_3}\overline{x_1}^{-1}+1).$$
\end{exmp}

\begin{exmp}
\rm Let $K=M(7/3,2,7/2)$, which is the mirror of $12a\_0423$ in the knot table. Write $K=D(T_1\ast T_2\ast T_3)$, with $T_1=[[3],[2]]$, $T_2=[2]$, $T_3=[[2],[3]]$.
Use $x,y,z$ to express other elements:
\begin{align*}
x'=z^{-1}.x, \qquad y'=(xy^{-1}x).y, \qquad z'=(xz)^{-1}.z, \qquad w=(xy^{-1}).x.
\end{align*}

\begin{figure}[h]
  \centering
  \includegraphics[width=0.42\textwidth]{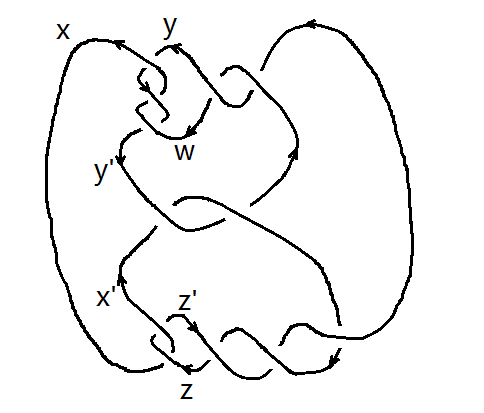}\\
  \caption{The knot $M(7/3,2,7/2)$}\label{fig:12a-0423}
\end{figure}

Computing routinely,
\begin{align*}
b_1^{\rm sw}&=u_3(x,y^{-1})=x(1+y^{-1}x-y^{-1}), \\
\left[\begin{array}{cc} b_1^{\rm ne} \\ b_1^{\rm se} \end{array}\right]&=G_2(y^{-1},w^{-1})\left[\begin{array}{cc} 1 \\ u_2(x,y^{-1}) \end{array}\right]
=\left[\begin{array}{cc} 1+y^{-1}(1-w^{-1})(x-xy^{-1}-1) \\ \star \end{array}\right], \\
b_2^{\rm se}&=y', \\
b_3^{\rm sw}&=u_2(x',z')=x'(1-z'), \\
\left[\begin{array}{cc} b_3^{\rm ne} \\ b_3^{\rm se} \end{array}\right]&=G_3(z',z^{-1})\left[\begin{array}{cc} 1 \\ x' \end{array}\right]
=\left[\begin{array}{cc} 1+z'(1+z^{-1}z'-z^{-1})(x'-1) \\ 1+z'(1-z^{-1})(x'-1) \end{array}\right].
\end{align*}

Then $\Delta_{K,\rho}$ can be obtained via (\ref{eq:3-strand-simplified}) by plugging in these.

Specialized to $\rho=\mathbf{1}$,
$$\overline{b_1^{\rm sw}}=2t-1, \quad \overline{b_2^{\rm se}}=t, \quad \overline{b_3^{\rm sw}}=t-t^2, \qquad t^2\overline{b_1^{\rm ne}}=\overline{b_3^{\rm ne}}=\overline{b_3^{\rm se}}-\overline{b_3^{\rm sw}}=2t^2-3t+2,$$
leading us to
$$\Delta_K\doteq (2t^2-3t+2)(2t^4-4t^3+5t^2-4t+2)\doteq (2t^2-3t+2)\Delta_{7_5}.$$ 

Similarly as in Example \ref{exmp:triple-twist}, we can consider representations $\pi(K)\to\mathbb{Z}_7\rtimes_2\mathbb{Z}_3$.
The relations of $\pi(K)$ are
$$(wy)^{-1}.y=(z'z^{-1}).z', \qquad y^{-1}.w=(y'x').y',$$
from which it follows that the assignment $x\mapsto\beta$, $y\mapsto\alpha^a\beta$, $z\mapsto\alpha^b\beta$
defines a nonabelian representation $\pi(K)\to\mathbb{Z}_7\rtimes_2\mathbb{Z}_3$ if and only if $b\equiv a\not\equiv0\pmod{7}$.
Let $\rho^K_a:\pi(K)\to{\rm SL}(3,\mathbb{C})$ denote the composite of this representation with $\psi$ (given in Example \ref{exmp:triple-twist}).
With the help of Mathematica, we compute
\begin{align*}
\Delta_{K,\rho^K_a}\doteq&\ \frac{1}{t^3-1}\Big(t^{18}-(2\zeta^{5a}+3\zeta^{4a}+\zeta^{3a}+4\zeta^{2a}+3\zeta^a+5)t^{15}+(11\zeta^{5a}+2\zeta^{4a}  \\
&+10\zeta^{3a}+10\zeta^{2a}+7\zeta^a+8)t^{12}-(4\zeta^{5a}+3\zeta^{4a}+8\zeta^{3a}+6\zeta^{2a}-\zeta^a)t^9-   \\
&(4\zeta^{5a}+\zeta^{3a}+5\zeta^{2a}+3\zeta^a+9)t^6+(\zeta^{5a}+2\zeta^{3a}+3\zeta^{2a}+2\zeta^a+2)t^3+1\Big).
\end{align*}

Assume there exists an epimorphism $\phi:\pi(K)\twoheadrightarrow\pi(7_5)$. Then $\rho^{7_5}_{\varepsilon}\circ\phi$ is conjugate to $\rho^K_a$ for some $a$ and $\varepsilon\in\{\pm1\}$. However, looking back to (\ref{eq:7_5-metabelian}), we see that $\Delta_{7_5,\rho^{7_5}_{\pm1}}\nmid\Delta_{K,\rho^K_a}$. Thus, while the ordinary Alexander polynomial does not detect $K\ngeqslant 7_5$, TAP does.
\end{exmp}

\end{document}